\newif\ifuseboldmathops
\newif\ifuseittextabbrevs
	\newcommand{\ie}{{\it i.e.}}
	\newcommand{\ie}{i.e.}
	\newcommand{\reals}{\mathbf{R}}
	\newcommand{\reals}{\mathbb{R}}
	\newcommand{\Expect}{\mathop{\bf E{}}\nolimits}
	\newcommand{\Expect}{\mathop{\mathbb{E}{}}\nolimits}
\newcommand{\Eventually}{\Diamond \, }
\newcommand{\until}{\mbox{$\, {\sf U}\,$}}
\newcommand{\sink}{\mathsf{sink} }
\newcommand{\BoolTrue}{\mbox{\sf true}}
\acrodef{mcmc}[mcmc]{Monte Carlo Markov chain}
\newcommand{\norm}[1]{\lVert#1\rVert}
\newcommand{\abs}[1]{\lvert#1\rvert}
\newcommand{\dist}[1]{\mbox{Dist}(#1)}
\newcommand{\prob}{\mbox{Prob}}
\newcommand{\calO}{\mathcal{O}}
\newcommand{\calL}{\mathcal{L}}
\newcommand{\calA}{\mathcal{A}}
\newcommand{\calAP}{\mathcal{AP}}
\newcommand{\init}{\mathsf{Init}}
\newcommand{\prog}{\mathsf{Prog}}
\newcommand{\inside}[1]{\llbracket
{#1} \rrbracket
}
\newcommand{\nat}{\mathbb{N}}
\newcommand{\indicator}{\mathbf{1}}
\acrodef{mdp}[MDP]{Markov decision process}
\acrodef{dfa}[DFA]{deterministic finite-state automaton}
\acrodef{ltl}[LTL]{linear temporal logic}
\acrodef{ag}[AG]{Assume-Guarantee}
\acrodef{ssp}[SSP]{Stochastic Shortest Path}
\acrodef{gcd}[GCD]{Generalized Conjunction-Disjunction}
 \newtheorem{definition}{Definition}
 \newtheorem{example}{Example}
\newtheorem{problem}{Problem}
\newtheorem{lemma}{Lemma}
\newcommand{\unsafe}{\mathsf{Unsafe}}
\newcommand{\goal}{\mathsf{Goal}}
\acrodef{ltl}[LTL]{linear temporal logic formula}
\acrodef{mdp}[MDP]{Markov decision process}
\acrodef{smdp}[Semi-MDP]{Semi-Markov decision process}
\acrodef{scltl}[sc-LTL]{}
\newtheorem{remark}{Remark}
\newtheorem{assumption}{Assumption}
\title{\LARGE \bf
Compositional planning in Markov decision processes: Temporal abstraction meets  generalized logic composition
}
\author{Xuan Liu and Jie Fu 
\thanks{X. Liu and J. Fu are with the Department of Electrical and Computer Engineering, Robotics Engineering Program, Worcester Polytechnic Institute, Worcester, MA. USA.  {\tt\small xliu9, jfu2@wpi.edu}
}
\thanks{This material is based upon work supported by the National Science Foundation under Grant No. \#1728412.}}
\begin{document}

\maketitle
\thispagestyle{empty}
\pagestyle{empty}

\begin{abstract}
In hierarchical planning for Markov decision processes (MDPs), temporal abstraction allows planning with macro-actions that take place at different time scale in form of sequential composition. In this paper, we propose a novel approach to compositional reasoning and hierarchical planning for MDPs under co-safe temporal logic constraints. In addition to sequential composition, we introduce a composition of policies based on generalized logic composition: Given sub-policies for sub-tasks and a new task expressed as logic compositions of subtasks, a semi-optimal policy, which is optimal in  planning with only sub-policies, can be obtained by simply composing sub-polices. 
Thus, a synthesis algorithm is developed to compute optimal policies efficiently by planning with primitive actions, policies for sub-tasks, and the compositions of sub-policies, for maximizing the probability of satisfying constraints specified in the fragment of co-safe temporal logic. We demonstrate the correctness and efficiency of the proposed method in  stochastic planning examples with a single agent and multiple task specifications.
\end{abstract}

\section{INTRODUCTION}

 Temporal logic is an expressive language to describe  desired system properties: safety, reachability, obligation, stability, and liveness \cite{manna2012temporal}. The algorithms for planning and probabilistic verification with temporal logic constraints have developed, with both centralized  \cite{baier2008principles,ding2011mdp, lahijanian2012temporal} and distributed methods \cite{fu2015optimal_distributed}. Yet, there are two main barriers to practical applications: 1) The issue of scalability: In temporal logic constrained control problems,  it is often necessary to introduce additional memory states for keeping track of the evolution of state variables with respect to these temporal logic constraints.  The additional memory states grow exponentially (or double exponentially depending on the class of temporal logic) in the length of a specification \cite{gastin2001fast} and make synthesis computational extensive. 2) The lack of flexibility: With a small change in the specification, a new policy may need to be synthesized from scratch.

To improve scalability for planning given complex tasks, composition is an idea exploited in temporal abstraction and hierarchical planning in \ac{mdp} \cite{Silver2012composition, bacon2017option}. To accomplish complex tasks, temporal abstraction allows planning with macro-actions---policies for simple subtasks---with different time scales. 
 A well-known hierarchical planner is called the  options framework \cite{parr1998reinforcement,sutton1999between,precup2000temporal}. An option is a pre-learned policy for a subtask given the original task that can be completed by temporally abstracting subgoals and sequencing the subtasks' policies.

Once an agent learns the set of options from an underlying MDP, it can use conventional reinforcement learning to learn the global optimal policy with the original action set augmented with the set of options, also known as sub-policies or macro-actions. 
In light of the options framework, hierarchical planning in MDPs is evolving rapidly, with both model-free \cite{kulkarni2016hierarchical} and model-based \cite{Silver2012composition, Sorg}, and with many practical applications in robotic systems \cite{konidaris2011autonomous, konidaris2012robot}. The option-critic method \cite{bacon2017option} integrates approximate dynamic programming \cite{bertsekas2008neuro} with the options framework to further improve its scalability. 

Since temporal logic specifications describe temporally extended goals and the options framework uses temporally abstracted actions, it seems that applying the options framework to planning under temporal logic constraints is straightforward. However, a direct application does not take full advantages of various compositions observed in temporal logic. The options framework captures the \emph{sequential composition}. However, it does not consider \emph{composition for conjunction or disjunction in logic}. In this paper, we are interested in answering two questions: Given two options that maximize the probabilities of $\varphi_1$ and $\varphi_2$, is there a way to compose these two options to obtain a ``good enough'' policy for maximizing the probability of $\varphi_1\land\varphi_2$, or $\varphi_1\lor \varphi_2$? If there exists a way to compose, what shall be the least set of options that one needs to generate? Having multiple ways of composition enables more flexible and modular planning given temporal logic constraints. For example, consider a specification $\Eventually ((R_1\lor R_2) \land \Eventually R_3)$, \ie, eventually reaching the region $R_3$ after visiting any of the two regions $R_1$ and $R_2$. With composition for sequential tasks only, we may generate an option that maximizes the probability of reaching $R_1\lor R_2$ and an  option that maximizes the probability of reaching $R_3$. With both compositions of sequencing, conjunction, and disjunction of tasks, we may generate options that maximize the probabilities of reaching $R_1$, $R_2$, and $R_3$, respectively, and compose the first two to obtain the option for $ \Eventually R_1\lor \Eventually  R_2$. In addition, we can compose options to not only for $\Eventually R_1\lor \Eventually  R_2$ but also have 
$\Eventually R_1\lor \Eventually  R_3$, 
$\Eventually R_1\lor \Eventually  R_2 \lor \Eventually R_3$, etc. When the task changes to  $\Eventually ((R_1\lor \bm{R_3}) \land \Eventually \bm{R_2})$, the new option for  $ \Eventually R_1\lor \Eventually  R_3$ needs not to be learned or computed, but composed.

In a pursuit to answering these two questions, the contribution of this paper is two-fold: We develop an automatic decomposition procedure to generate a \emph{small set} of primitive options from a given co-safe temporal logic specification. We formally establish an equivalence relation between  Generalized Conjunction/Disjunction (GCD) functions \cite{Dujmovi2007} in quantitative logic and composable solutions of \ac{mdp} using entropy-regulated Bellman operators \cite{Silver2012composition,nachum2017bridging}. This equivalence enables us to compose policies for simple formulas/tasks to maximize the probability for satisfying formulas obtained via GCD composition of these simple formulas. Last, we use these novel composition operations to develop a hierarchical planning method for \ac{mdp} under co-safe temporal logic constraints. We demonstrate the efficiency and correctness of the proposed method with several examples.

\section{Preliminaries} 

Notation:
Let $\nat$ be the set of nonnegative integers.
Let $\Sigma$ be an alphabet (a finite set of symbols). Given $k\in \nat$, $\Sigma^k$ indicates a set of strings with length $k$, $\Sigma^{\leq k}$ indicates a set of finite strings with length smaller or equal to $k$, and $\Sigma^0 = \lambda$ is the empty string. $\Sigma^\ast$ is the set of all finite strings (also known as Kleene closure of $\Sigma$).  Given a set $X$, let $\dist{X}$ be a set of probabilistic distributions with $X$ as the support.

In this paper, we consider temporal logic formulas for specifying desired properties in a stochastic system. Given a set $\calAP$ of atomic propositions, a syntactically co-safe linear temporal logic (sc-LTL) \cite{kupferman2001model} formula over $\calAP$ is inductively defined as follows:
\begin{center}
    $\varphi \coloneqq \BoolTrue | p | \neg p |\varphi_1\land\varphi_2 | \varphi_1\lor\varphi_2 | \bigcirc\varphi | \varphi_1\until \varphi_2 $.
\end{center}

The above formula is composed of unconditional true $\BoolTrue$, state predicates $p$ and its negation $\lnot p$, conjunction ($\land$) and disjunction ($\lor$), temporal operators ``Next'' ($\bigcirc$), and ``Until'' ($\until$). Temporal operator ``Eventually'' ($\Eventually$) is defined by: $\Eventually \phi \coloneqq \BoolTrue \until \phi$. However, temporal operator ``Always'' cannot be expressed in sc-LTL.
A detailed description of the
syntax and semantics of sc-LTL can be found in \cite{pnueli1977temporal}. An sc-LTL formula $\varphi$ is evaluated over finite words. In addition to the above notation, we use a backslash ($\backslash$) between two propositions to represent the logic exclusion, \ie, rewriting $\varphi_1\land\lnot\varphi_2$ to $\varphi_1\backslash\varphi_2$.

Given an sc-LTL formula $\varphi$, there exists a \ac{dfa} that accepts all strings that satisfy the formula $\varphi$ \cite{gastin2001fast}. The \ac{dfa} is a tuple
$\calA_\varphi=\langle Q, \Sigma, \delta, q_0, F \rangle$, where $Q$
is a finite set of states, $\Sigma = 2^\calAP$ is a finite
alphabet, $\delta: Q\times \Sigma \rightarrow Q$ is a
\emph{deterministic} transition function such that when the symbol
$\sigma \in \Sigma$ is read at state $q$, the automaton makes a
deterministic transition to state $\delta(q,\sigma)=q'$, $q_0 \in Q$
is the initial state, and $F\subseteq Q$ is a set of final,
\emph{accepting} states.  The transition function is extended to a
sequence of symbols, or a \emph{word}
$w=\sigma_0\sigma_1\ldots\in\Sigma^\ast$, in the usual way:
$\delta (q,\sigma_0 v) = \delta(\delta(q,\sigma_0),v)$ for
$\sigma_0 \in \Sigma$ and $v\in \Sigma^\ast$. A finite
word $w$ satisfies $\varphi$ if and only if $\delta(q_0, w)\in F$. The
set of words satisfying $\varphi$ is the \emph{language} of the
automaton $\calA_\varphi$, denoted $\calL(\calA_\varphi)$.

We consider stochastic systems modeled by \ac{mdp}. The specification is given by an sc-LTL formula and related to  paths in an MDP via a labeling function. 

 \begin{definition}[Labeled \ac{mdp}]   
\label{def:labeledmdp}
  A labeled MDP is a tuple $ M= \langle S, A,  \mu_0, P,  \calAP,
  L \rangle $ where $S$ and $A$ are finite state and action sets.
  $\mu_0\in \dist{S}$ is the initial state distribution.  The transition probability
  function $P: S \times A \times S \rightarrow [0,1]$ is defined
  such that $\sum_{s' \in S} P(s,a,s') \in \{0,1\}$ for any state
  $s\in S$ and any action $a \in A$. 
  $\calAP$ is a finite
  set of atomic propositions  
  and $L: S \rightarrow 2^{\calAP}$ is a 
  labeling function which assigns to each state $s \in S$ a set of
  atomic propositions $L(s)\subseteq \calAP$ that are valid at the
  state $s$. $L$ can be extended to state sequences in the usual way,
  i.e., $L(\rho_1\rho_2)=L(\rho_1)L(\rho_2)$ for $\rho_1,\rho_2\in
  S^\ast$.
\end{definition}

A finite-memory, stochastic policy in the MDP is a function $\pi:S^\ast \rightarrow \dist{A}$. A Markovian, stochastic policy in the MDP is a function $\pi:S\rightarrow \dist{A}$. Given an MDP $M$ and a policy $\pi$, the policy induces a Markov chain $M^\pi = \{s_t|t = 1, \ldots, \infty\}$ where  $s_k$ as the random variable for the $k$-th state in the Markov chain $M^\pi$ and it holds that $s_0 \sim \mu_0$ and $s_{i+1} \sim P(\cdot |s_i,a_i)$ and $a_i\sim\pi(\cdot | s_i)$.

Given a finite (resp. infinite) path $\rho  =s_0s_1\ldots s_N \in S^\ast$ (resp. $\rho \in S^\omega$), we obtain a sequence of labels $L(\rho) = L(s_0)L(s_1)\ldots L(s_N)\in \Sigma^\ast$ (resp. $L(\rho) \in \Sigma^\omega$). A path $\rho$ satisfies the formula $\varphi$, denoted $\rho\models \varphi$, if and only if $L(\rho) \in \calL(\calA_\varphi)$. Given a Markov chain induced by policy $\pi$,  the probability of satisfying the specification, denoted $\prob(M^\pi\models \varphi)$ is the sum of the probabilities of paths satisfying the specification.
\[
\prob(M^\pi \models \varphi) \coloneqq \Expect\left[\sum_{t=0}^\infty \indicator(\bm \rho_t\models \varphi)\right].
\]
where ${\bm \rho}_t =s_0s_1\ldots s_t$ is a path of length $t$ in $M^\pi$.

We relate each subset  $\sigma  \in 2^\calAP$ of atomic propositions to a propositional logic formula $ \land_{p\in \sigma} p \land (\lor_{p'\in \calAP\setminus \sigma} \neg p')$. A set of states satisfying the propositional logic formula for $\sigma$ is denoted $\inside{\sigma}$.   Given a subset of proposition set $C\subseteq 2^\calAP$, let $\inside{C} = \bigcup_{c\in C} \inside{c}$.  Slightly abusing the notation, we use $\sigma$ to refer to  the propositional logic formula that $\sigma$ corresponds to. 
The optimal planning problem for \ac{mdp} under sc-LTL constraints is defined as follows.
 
\begin{problem}
\label{problem1}
Given an MDP and an sc-LTL formula $\varphi$, design a policy $\pi$ that maximizes the probability of satisfying the specification, \ie, 
$
\pi\leftarrow \arg\max_\pi \prob(M_{\pi} \models \varphi).
$
\end{problem}
Problem~\ref{problem1} can be solved with dynamic programming methods in a product MDP. The idea is to augment the state space of the MDP with additional memory states---the states in the automaton $\calA_\varphi$, and reformulate the problem into a stochastic shortest path problem in the product MDP with the augmented state space. The reader is referred to \cite{ding2011mdp} for more details. In this paper, our goal is to develop an efficient and hierarchical planner for solving Problem~\ref{problem1}.

  \begin{remark}
  The extension from sc-LTL to the class of LTL formulas can be made by expressing the specification formula using a deterministic Rabin automaton \cite{duret2016spot, giannakopoulou2001automata} and perform two-step synthesis approach: The first step is to compute the maximal accepting end components, and the second step is to solve the \ac{ssp} MDP in the product MDP (assigning reward $1$ to reaching a state in any maximal accepting end component). The details of the method can be found in \cite{Luca1995,ding2011mdp, ding2011ltl}. Particularly, the tools facilitate symbolic computation of maximal accepting end components have been developed \cite{Chatterjee2013}. 
  In the scope of this paper, we only consider sc-LTL formulas. Yet, the generalization can be made to handle planning for general LTL formulas with a similar two-step approaches. 
  \end{remark}
  
\section{Hiererachical and compositional planning under sc-LTL constraints}
  In this section, we present a compositional planning method for solving Problem~\ref{problem1}.  First, we propose a task decomposition method to identify a set of modular and reusable \emph{primitive options}. Second, we establish a relation between logical conjunction/disjunction and composition of primitive options. Building on the options framework, we develop a hierarchical and compositional planning method for temporal logic constrained stochastic systems.

\subsection{Automata-guided generation of primitive options}
\label{sec:primitive_options}

We present a procedure to decompose the task $\varphi$ in sc-LTL into a set of primitive tasks. These primitive tasks will be composed in Sec~\ref{subsec: composition} to generate the set of options in hierarchical planning. 

Given a specification automaton $\calA_\varphi = (Q, \Sigma, \delta, q_0, F)$, let the rank of a state be the minimal number of transitions to the set $F$ of final states. Let $L_k$ be the set of states of rank $k$. Thus, we have \begin{itemize}
    \item $L_0=F$, and
    \item $L_k = \{q\mid \exists w\in \Sigma^k, \delta(q, w)\in F \text{ and }\forall \ell <k,  \forall w \in  \Sigma^\ell,\delta(q,w)\notin F \}$.
\end{itemize}
By definition, if \ac{dfa} $\calA_\varphi$ is coaccessible, \ie, for every state $q\in Q$ there is a word $w$ that takes us from $q$ to a final state, then  for any state $q\in Q$, there exists $L_k$ with a finite rank $k$ that includes  state $q$. Any \ac{dfa} can be made coaccessible by trimming \cite{sakarovitch2009elements}. Finally, for a coaccessible \ac{dfa}, we introduce a sink state to make it complete: For a state $q$ and symbol $\sigma \in \Sigma$, if $\delta(q,\sigma)$ is undefined, then let $\delta(q,\sigma)=\sink$.

 Based on the ranking, for each state $q\in Q$, we distinguish two types of transitions from the state:
\begin{itemize}
    \item A transition is  \emph{progressing}: $q\xrightarrow{\sigma} q'$ and if $q\in L_k$ then $q'\in L_{k-1}$. 
    \item A transition is \emph{unsafe}: $q\xrightarrow{\sigma}\sink$, where $\sink$ is a non-accepting state with self-loops on all symbols.
\end{itemize}
Note that the \ac{dfa}  may have self-loops which are not included in either progressing transition or unsafe transitions. However, we shall see later that ignoring these self-loops will not affect the optimality of the planning algorithm.

A state may have multiple progressing and unsafe transitions. Let  ${\unsafe}(q)$ be the set of labels for unsafe transitions on $q$. Let $\prog(q) $ be the set of labels for progressing transitions on $q$. 
  A conditional reachability formula is defined for $q$ as:
\[ 
\neg \varphi_{\unsafe(q)} \until \varphi_{\prog(q)}, \] 
where $\varphi_{\unsafe(q)} = \land_{\sigma \in {\unsafe}(q)} \sigma$ and $\varphi_{\prog} = \lor_{\sigma \in {\prog}(q)} \sigma $ and $\sigma \in \Sigma$.  This subformula is further decomposed into: 
\[ 
\varphi^q_i \coloneqq \neg \varphi_{\unsafe(q)} \until \sigma_i, \quad \text{ for each } \sigma_i \in \prog(q).  \] 
We define the decomposition of $\varphi$ as the collection of conditional reachability formulas 
\[
\Phi^{cr}= \{\varphi^q_i, q\in Q \mid i=1,\ldots,\abs{ \prog(q)}\} \text{.}
\] 

Next, we prune $\Phi^{cr}$ to obtain the set $\Phi\subseteq \Phi^{cr}$  of \emph{primitive tasks}: $\phi \in \Phi\cap \Phi^{cr}$ if and only if there does not exist a set of formulas $\phi_i \coloneqq \neg \psi \until \sigma_i$  $i=1,\ldots, k$, such that $\phi = \neg \psi \until \land_{i=1}^k \sigma_i$.  

For each primitive task, the policy for maximizing the probability of satisfying a conditional reachability formula $\varphi_i^q \in \Phi$ can be solved through stochastic shortest path problem in the MDP $M$, referred to as an \ac{ssp} MDP, with a formal definition follows.  \begin{definition} \label{def:ssp}
  A (discounted) \ac{ssp} MDP is defined as a tuple
  $M =(S,A, P, r, \gamma, \goal, \unsafe, s_0)$ where $\goal\subseteq S$ is a set of
  \emph{absorbing goal} states and $\unsafe \subseteq S$ is a set of \emph{absorbing unsafe} states. The transition probability function $P$ satisfies $P(s|s,a)=1$ for all $s\in \goal \cup \unsafe$, for all $a\in A$. The planning problem is to maximize the (discounted) probability of reaching $\goal$ while avoiding $\unsafe$. 
This  objective is equivalent to maximizing the total (discounted) reward with
 the reward function $r: S\times A\rightarrow \reals$ defined as: For
  each $s \in \goal \cup \unsafe$, $r(s,a)=0$ for all $a\in A$, $r(s,a) = \Expect_{s'} \indicator_{\goal}(s')$ for $s\notin \goal$.
  $\gamma \in (0,1]$ is the discounting factor.
\end{definition}

Given $\varphi_i^q = \neg \varphi_{\unsafe(q)}\until \sigma_i $, the corresponding \ac{ssp} MDP shares the same state and action sets with the underlying MDP $M$ that models the system. The transition function is revised from the transition function in the original MDP $M$ by making $\goal =\inside{\sigma_i}$ and $\unsafe  =\inside{\varphi_{\unsafe(q)}}$ absorbing states. Recall that $\inside{\phi}$ is a set of states satisfying the propositional logic formula $\phi$.  
Note when $\gamma\ne 1$, the solution of \ac{ssp} MDP is the solution of a discounted stochastic shortest path problem.  The expected total reward becomes the discounted probability of satisfying the conditional reachability formula.

For stochastic shortest path problems, there exists a deterministic, optimal, Markov policy. However, to compose policies, we use a  class of policies called \emph{entropy-regulated policies}, where softmax Bellman operator is used instead of hardmax Bellman operator. 
Given $\tau$ as the temperature parameter, the optimal value function with softmax Bellman operator satisfies:
 \[
  V^\ast(s) = \tau \log \sum_{a\in A} \exp\{( r(s,a) + \Expect_{s'\sim P(\cdot |s,a)}V^\ast (s'))/\tau\}.
 \]
 The Q-function is:
 \[
 Q^\ast(s,a) = r(s,a) + \Expect_{s'\sim P(\cdot|s,a)} V^\ast(s'), 
 \] 
 and the entropy-regulated optimal policy is:
 \begin{align*}
 \pi^\ast(a|s)  &= \exp \left((Q^\ast (s,a) -V^\ast(s))/\tau\right) \\
 &= \frac{\exp(Q^\ast(s,a)/\tau)}{\sum_{a'} \exp(Q^\ast(s,a')/\tau)}.
 \end{align*}
In the following, by optimal policy/value function, we mean the entropy-regulated optimal policy/value function unless otherwise specified.
 
It is noted that the softmax Bellman operator is also proved  equivalent to the following form \cite{nachum2017bridging}:
 \begin{align*}
     V^\ast(s) = \sum_{a\in A} \pi^\ast(a|s)[r(s,a) - \tau\log\pi^\ast(a|s) \\+ \gamma \Expect_{s'\sim P(\cdot |s,a)}V^\ast (s')],
 \end{align*}
 which means in softmax optimal planning the objective has to trade off maximizing the reward and minimizing the total entropy of the stochastic policy---such a trade off is reflected in the choice of $\tau$. In this case, the construction of reward function needs to be different from the reward defined in Def.~\ref{def:ssp} to reduce the value diminishing problem, which means the entropy of the stochastic policy outweights the total reward for small reward signals in softmax optimal planning. In this paper, we define the reward function for the entropy-regulated MDP as:
\begin{equation}
    \label{eq:entropy_reward}
    r (s,a) =   \alpha\cdot \Expect_{s'} \indicator_{\goal}(s'), \quad \forall s\notin \goal,
\end{equation}
where $\alpha >0$ is a large constant.
 
 For each conditional reachability subtask $\varphi_i^q$, the optimal policy $\pi$ in the corresponding stochastic shortest path MDP is an option $o\coloneqq  \langle I , \pi  , \beta \rangle $, following the definition in \cite{sutton1999between,precup2000temporal} in which $I  \subseteq S$ is an initiation set and is the domain of $\pi$ and $\beta: S\rightarrow [0, 1]$ is the termination
function, defined by $\beta(s)=1$ only if $s\in \unsafe \cup \goal$.  We refer the option for task $\neg \psi \until \sigma$ as $O(\sigma, \psi)$.

\begin{definition}
\label{def:po}
The option $O(\sigma, \psi)$ for subtask $\neg \psi \until \sigma$ is a primitive option if and only if $\sigma$ is an atomic proposition and $\psi$ is the co-safe constraint in the given task DFA.
\end{definition}

\begin{figure}[t] 
\centering
\resizebox{.38\textwidth}{!}{
\begin{tikzpicture}[->,>=stealth',shorten >=1pt,auto,node distance=2.5cm,
                    semithick]
  \tikzstyle{every state}=[fill=white,draw=black,text=black]

  \node[initial,state] (A) {$q_{init}$};
  \node[state]         (B) [below = 2cm of A] {$q_1$};
  \node[state]         (C) [below right of=B] {$q_3$};
  \node[state]         (D) [below left of=B] {$q_2$};
  \node[state, accepting]         (E) [left=3cm  of D]       {$q_4$};
  \node[state, accepting]         (F) [above right of=C]       {$q_{sink}$};

  \path (A) edge node {$\sigma_1$} (B)
  			edge [dashed] node {$C$} (F)
        (B) edge [loop left] node {$\sigma_1$} (B)
            edge node [left] {$\sigma_3 \backslash \sigma_2$} (C)
            edge [bend right] node [left] {$\sigma_2 \backslash \sigma_3$} (D)
            edge [bend right = 70] node {$\sigma_2\land\sigma_3$} (E)
            edge [dashed] node {$C$} (F)
        (C) edge [loop left] node {$\sigma_1\lor(\sigma_3\backslash\sigma_2)$} (C)
            edge [bend left = 30] node {$\sigma_2$} (E)
            edge [dashed] node {$C$} (F)
        (D) edge [loop left] node {$\sigma_1\lor(\sigma_2\backslash\sigma_3)$} (D)
            edge [bend right] node [left=0.8cm] {$\sigma_3$} (E)
        (D.east) edge [bend right = 110, dashed] node [right =0.9cm] {$C$} (F)
        (E);
        
   \node[draw,text width=8.5cm] at (0,-7) {Primitive task options: $O(\sigma_1, C), O(\sigma_2, C), O(\sigma_3, C)$};
   \node[draw,text width=11cm] at (0,-8) {Non-primitive task options: $O(\sigma_2 \land \sigma_3, C), O(\sigma_2 \backslash \sigma_3, C), O(\sigma_3 \backslash \sigma_2, C)$};

\end{tikzpicture}
}
\caption{The simplified DFA translation of the task $\lnot C\until \left(\Eventually (\sigma_1 \land (\Eventually \sigma_2 \land \Eventually \sigma_3)) \right)$. 
}
\vspace{-4ex}
\label{fig:DFA}
\end{figure}

\begin{example}
Consider the \ac{dfa} in Fig.~\ref{fig:DFA} and the corresponding sc-LTL task specification is $\lnot C\until\left(\Eventually (\sigma_1 \land (\Eventually \sigma_2 \land \Eventually \sigma_3)) \right)$ (reach $\inside{\sigma_1}$ and then reach regions $\inside{\sigma_2}$ and $\inside{\sigma_3}$, always avoid $\inside{C}$). The set of atomic propositions are $\{\sigma_1,\sigma_2,\sigma_3, C\}$. The level sets are $L_0=\{q_4\}$, $L_1 =\{q_1,q_2,q_3\}$, and $L_2 =\{ q_\init\}$. For a given state, for example, $q_1$, the set of labels for progressing transitions are $\{q_1\xrightarrow{\{\sigma_2,\sigma_3\}} q_4\}$. According to Def.~\ref{def:po}, the set of primitive tasks are $\neg C\until \sigma_1, \neg C\until \sigma_2, \neg C\until \sigma_3$. Therefore, three  primitive options are computed as: $O(\sigma_i, C)$, for $i=1,2,3$. In addition, $\neg C\until (\sigma_2\land \sigma_3)$, $\neg C\until (\sigma_2\backslash \sigma_3)$ and $\neg C\until (\sigma_3\backslash \sigma_2)$ are not primitive tasks.

\end{example}

\subsection{Composition of options with  disjunction and conjunction}
\label{subsec: composition}
For a given state $q\in Q$, we have obtained a set of conditional reachability formulas $\varphi_i^q$, for $i=1,\ldots, n$ where $n$ is the total number of primitive tasks generated from $q$. However, a progress transition can be made by satisfying any of the conditional reachability formula. That is to say, we may be interested in synthesizing option that maximizes $\lor_{i=1}^n \varphi_i^q$, or potentially the conjunction/disjunction of a subset of all primitive tasks $\Phi$. A naive approach is to take the new specification $\lor_{i=1}^n \varphi_i^q$, construct a \ac{dfa}, and synthesize the optimal policy using methods \cite{ding2011mdp,baier2008principles} for \ac{mdp} under temporal logic constraints. However, we are interested in  finding a ``good enough'' policy given the new specification via composing existing policies. The problem is formally stated as follows.
\begin{problem}
Given two conditional reachability formulas $\varphi_1\coloneqq  \neg \varphi_{\unsafe} \until \sigma_1$, and $\varphi_2 \coloneqq \neg \varphi_{\unsafe} \until \sigma_2 $, construct a \emph{good enough} policy given the goal of maximizing  the probability of satisfying the disjunction: $\varphi_1\lor \varphi_2 $;
or b) the conjunction: $\varphi_1\land \varphi_2$; or c) the exclusion $\varphi_1\backslash \varphi_2$ or $\varphi_2\backslash \varphi_1.$
\end{problem} 
The definition of ``good enough'' policies will be provided later.  
Here, we consider the case when $\varphi_1$ and $\varphi_2$ share the same set of
unsafe states. Particularly, if $\varphi_1 =  \varphi_i^q$, and $\varphi_2= \varphi_j^q$ for some $i\ne j$ and $q\in Q$, then it is always the case that  $\varphi_1$ and $\varphi_2$ share the same set of
unsafe states. Next, we propose a method for policy composition based on generalized logic conjunction/disjunction \cite{Dujmovi2007}, which is briefly introduced below.

\noindent \paragraph*{Generalized conjunction/disjunction}
\ac{gcd} was introduced in \cite{Dujmovi2007} for quantitative reasoning with
logic formulas. \ac{gcd} is a mapping
$\lambda: [0,1]^n \rightarrow [0,1]$, $n>1$, that has properties
similar to logic conjunction and disjunction. The level
of similarity is adjustable using a parameter $\eta$, called the conjunction
degree (andness). Formally, let $x_1,\ldots, x_n$ be variables
representing the \emph{level of truthfulness} for a set of logic
formulas $\varphi_1,\ldots, \varphi_n$, the \ac{gcd} formula $\lambda(x_1,\ldots, x_n |\eta)$, which
unifies conjunction and disjunction, is defined as,
\begin{align}
\lambda(x_1,\ldots, x_n |\eta) =  \frac{1}{\eta}\log(\sum_{i=1}^n W_i \exp(\eta x_i)),\\ \quad 0<|\eta|<+\infty, \nonumber
\end{align}
where when $\eta\rightarrow \infty$,
$ \lim_{\eta\rightarrow \infty} \lambda(x_1, x_2, ..., x_n|\eta) = x_1\lor x_2\lor
... \lor x_n$ recovers the conventional disjunction, and when
$\eta\rightarrow -\infty$,
$ \lim_{\eta\rightarrow - \infty} \lambda(x_1, x_2, ..., x_n|\eta) = x_1\land
x_2\land ... \land x_n$ recovers the conventional conjunction. For any
$\eta \in (-\infty, +\infty)$, $ \lambda(x_1, x_2, ..., x_n|\eta) $ returns a
\emph{level of truthfulness} of a \ac{gcd}. In addition, parameter $W_i$ is the corresponding weight (or relative importance) of 
the $i$-th formula, for $i=1,\ldots, n$. In this paper, we select $W_i=1$ by assuming that all formulae are equally important.

We use \ac{gcd} to compose a ``good enough'' policy, that is, the optimal policy in semi-MDP planning. 

\begin{definition}\cite{sutton1999between,precup2000temporal} Given an MDP $M = (S, A, P, \gamma, r)$ and a set $O = \{o_i= \{\mathcal{I}_i, \pi_i, \beta_i\},i=1,\ldots, n\}$ of options where $\mathcal{I}_i$ is a set of initial states, $\beta_i: S\rightarrow [0,1]$ is a termination condition and $\pi_i: S\rightarrow \dist{A}$ is a policy in the MDP $M$.  An \emph{option policy} in $M$ is a function $\pi^o: S\rightarrow \dist{O}$. let $\Pi^o$ be the set of option policies in $M$.
Given a reward function $r:S\times A \rightarrow \reals$, an option policy is optimal if and only if it maximizes the total discounted reward:
\begin{equation}
\pi^{o,\ast} = \arg \max_{\pi^o\in \Pi^o} \Expect_{\pi^o} \sum_{t=1}^\infty r(s_t,o_t) 
\end{equation}
where $r(s_t,o_t) = \Expect_{\pi_{o_t}} \left[ r_{t+1}+\gamma r_{t+2} + \ldots \gamma^{k-1}r_{t+k}\right]$ is the total accumulated rewards when the policy $\pi_{o_t}$ of option $o_t$ is applied to the MDP for the duration of $k$ steps.
\end{definition}

\begin{assumption}
\label{assume:absorbing}
Given a conditional reachability subtask $\varphi_i$, the optimal policy $\pi$ that maximizes the probability of satisfying $\varphi_i$ induces in $M$ an absorbing Markov chain $M^{\pi_i}$.
\end{assumption}
In other words, with probability one, the system will visit an absorbing state.
\begin{lemma}
\label{lemma:GCD}
\label{lma:composition}
Assuming \ref{assume:absorbing}, given a set $O = \{o_i= \{\mathcal{I}_i, \pi_i, \beta_i\},i=1,\ldots, n\}$ of options   where  $\mathcal{I}_i=S$, $\pi_i$ is
the softmax optimal policy for maximizing the probability of satisfying $\varphi_i= \neg\varphi_{\unsafe} \until \sigma_i$, \ie, the \ac{ssp} MDP $M_i =(S,A, P, r, \gamma, \inside{\sigma_i}, \inside{\varphi_\unsafe}, s_0)$
$\beta_i(s)= \indicator_X(s) $ where $X = { \bigcup_{j=1}^n \inside{\sigma_j} \cup \unsafe}$ is the
termination function.  In the MDP $M$, the optimal option policy for maximizing the \ac{gcd} $
\lambda(\varphi_1,\ldots,\varphi_n \mid \eta; s)  $  for any $s\in S$, with unit weights, \ie, $W_i=1$, $i=1,\ldots, n$, is 
 \begin{equation}
 \label{eq:option-policy}
  \pi^o (s)[j] = \frac{\sum_{i=1}^n \exp(\eta Q_i(s, o_j))}{\sum_{o_k\in
      O}\sum_{i=1}^n \exp(\eta Q_i(s, o_k) ) }, \text{ for  }j=1,\ldots, n.
\end{equation}
where $Q_i(s,o_j)$ is the evaluation of policy $\pi_j$ with respect to specification $\varphi_i$.
\end{lemma}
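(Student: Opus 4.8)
The plan is to collapse the semi-MDP optimization to a single entropy-regularized choice over the finite option set $O$, identify the option-value of $o_j$ at $s$ with the \ac{gcd} of the cross-evaluations $Q_1(s,o_j),\dots,Q_n(s,o_j)$, read off the softmax-optimal distribution over options, and finally eliminate the nested $\log$/$\exp$ via $\exp(\log(\cdot))=(\cdot)$.

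First I would settle well-definedness. By Assumption~\ref{assume:absorbing}, the internal policy $\pi_i$ of each option induces an absorbing Markov chain in $M$, and the termination rule $\beta_i=\indicator_X$ with $X=\bigcup_{j=1}^n\inside{\sigma_j}\cup\unsafe$ halts $o_i$ the first time the run enters $X$. Since $X$ contains every goal set $\inside{\sigma_j}$ and the unsafe set, the hitting time of $X$ is almost surely finite, so every option terminates with probability one. Hence for each pair $(i,j)$ the cross-evaluation $Q_i(s,o_j)=\Expect_{\pi_j}\!\big[\sum_{t}r_i(s_t,a_t)\big]$ --- the expected return of running $o_j$'s internal policy $\pi_j$ from $s$ until termination, scored by the reward $r_i$ of the \ac{ssp} MDP $M_i$ (see Def.~\ref{def:ssp} and \eqref{eq:entropy_reward}) --- is finite, and so is $\widetilde{Q}(s,o_j):=\lambda\big(Q_1(s,o_j),\dots,Q_n(s,o_j)\mid\eta\big)$.

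The core step is the reduction. Because all options have initiation set $S$ and terminate inside the absorbing set $X$, on which all task rewards vanish, no reward is collected once the first option returns, so the composed objective at $s$ is determined by the single option chosen at $s$, and its entropy-regularized value under $o_j$ is exactly $\widetilde{Q}(s,o_j)$. The objective being the \ac{gcd} of the per-task returns, the optimal option policy is the entropy-regulated (softmax) policy over $\{\widetilde{Q}(s,o_k)\}_k$ of the form recalled in Sec.~\ref{sec:primitive_options}, specialized to the option ``action'' set, with option-level temperature $\tau=1/\eta$, namely $\pi^o(s)[j]=\exp(\eta\,\widetilde{Q}(s,o_j))/\sum_{o_k\in O}\exp(\eta\,\widetilde{Q}(s,o_k))$. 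Substituting the \ac{gcd} definition $\widetilde{Q}(s,o_j)=\tfrac1\eta\log\sum_{i=1}^n\exp(\eta Q_i(s,o_j))$ and using $\exp(\log(\cdot))=(\cdot)$ gives $\exp(\eta\,\widetilde{Q}(s,o_j))=\sum_{i=1}^n\exp(\eta Q_i(s,o_j))$, so that $\pi^o(s)[j]=\sum_{i=1}^n\exp(\eta Q_i(s,o_j))/\sum_{o_k\in O}\sum_{i=1}^n\exp(\eta Q_i(s,o_k))$, which is \eqref{eq:option-policy} verbatim; the same substitution in the softmax value gives the companion identity $V^o(s)=\tfrac1\eta\log\sum_{o_k\in O}\sum_{i=1}^n\exp(\eta Q_i(s,o_k))$, certifying optimality.

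I expect the hard part to be this reduction, not the algebra. One must make rigorous that ``maximizing the \ac{gcd} $\lambda(\varphi_1,\dots,\varphi_n\mid\eta;s)$ over $\Pi^o$'' is precisely the entropy-regularized semi-MDP whose Bellman optimality operator has $V^o$ as a fixed point and the claimed $\pi^o$ as its greedy (softmax) policy; in particular one must (i) pin down the composed semi-MDP reward as a \ac{gcd} of the individual task rewards; (ii) handle the fact that $\lambda$ does not commute with expectation, so the composition has to be taken on the already-integrated quantities $Q_i(s,o_j)$ rather than inside the expectation; and (iii) reconcile the sign of the andness $\eta$ (disjunction-leaning $\eta>0$ versus conjunction-leaning $\eta<0$) with the sign of the induced option-level temperature $\tau=1/\eta$ so that the softmax remains a genuine maximizer. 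The degenerate limits $\eta\to\pm\infty$ (hardmax/hardmin) then follow by continuity.
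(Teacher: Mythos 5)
For the disjunction case ($\eta>0$) your argument is essentially the paper's: identify the cross-evaluation $Q_i(s,o_j)=\Expect_{j}[\indicator(s\models\varphi_i)]$, observe that $\lambda(\varphi_1,\ldots,\varphi_n\mid\eta;s,o_j)=\frac{1}{\eta}\log\sum_{i=1}^n\exp(\eta Q_i(s,o_j))$, take the entropy-regulated policy over the option set with temperature $\tau=1/\eta$, and let the $\exp/\log$ cancellation produce \eqref{eq:option-policy}. That half, together with your well-definedness and one-shot-reduction remarks, matches the paper's proof.

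The gap is the conjunction case $\eta<0$, which you explicitly defer in your item (iii) instead of proving. Your single-formula shortcut, ``softmax over $\widetilde Q(s,o_k)$ with $\tau=1/\eta$,'' is not a maximizing policy when $\eta<0$: since $x\mapsto\exp(\eta x)$ is then decreasing, the distribution $\pi^o(s)[j]\propto\exp(\eta\,\widetilde Q(s,o_j))$ exponentially prefers options with the \emph{lowest} GCD value, so it cannot be read off as the softmax-greedy policy for maximizing $\lambda$, and your closing ``certifying optimality'' identity $V^o(s)=\frac{1}{\eta}\log\sum_{o_k}\sum_i\exp(\eta Q_i(s,o_k))$ is a soft \emph{minimum} over options for negative $\eta$, hence no certificate. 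The paper's proof devotes its entire Case II to exactly this point: for $\eta<0$ it writes $\lambda=-\frac{1}{\abs{\eta}}\log\sum_i\exp(-\abs{\eta}x_i)$, argues that maximizing the conjunction GCD amounts to minimizing $\log\sum_i\exp(\eta x_i)$, and accordingly uses a \emph{softmin} operator, $\pi^o(s)[j]\propto\exp(-\lambda(\cdot;s,o_j)/\tau)$ with $\tau=-1/\eta>0$ (so the option whose subtask is least satisfied is preferred, the intended conjunction behavior discussed after the lemma); only then does the algebra collapse to the same expression \eqref{eq:option-policy}. Without an argument of this kind your proof establishes the lemma only for $\eta>0$; the $\eta\to\pm\infty$ limits you invoke are not needed for the statement and do not fill this hole.
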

\begin{proof}
For state $s$ and an option $o_j$, by definition of \ac{gcd}, we have \[ \lambda(\varphi_1,\ldots,\varphi_n \mid \eta; s, o_j) = \frac{1}{\eta}\log(\sum_{i=1}^n \exp(\eta \Expect_{ j}[\indicator(s\models\varphi_i)])) .\] 
where the expectation $\Expect_{j} \left[ \cdot \right]$ is taken over paths in the Markov chain $ M^{\pi_j}$. 
 Given the option $o_j$, $\Expect_{j} \indicator(s\models\varphi_i) = Q_i(s, o_j)$---the evaluation of policy $\pi_j$ with respect to specification $\varphi_i$ and thus $\lambda(\varphi_1,\ldots,\varphi_n \mid \eta; s, o_j) = \frac{1}{\eta}\log(\sum_{i=1}^n \exp(\eta Q_i(s,o_j))$. 
 
Next, we distinguish two cases between conjunction and disjunction:

Case I (Disjunction): $\eta > 0$: 
  To maximize $\lambda(\varphi_1,\ldots,\varphi_n \mid \eta; s) $ given an option-only decision rule $\pi^o(s):O\rightarrow [0,1]$, based on the softmax operator, when $\eta >0$, $\pi^o(s)[j] \propto \exp( \lambda(\varphi_1,\ldots,\varphi_n \mid \eta; s, o_j)/\tau)$ where $\tau>0$ is a temperature parameter. When $\tau = 1/\eta$, then 
 \begin{align}
  \pi^o (s)[j] &= \frac{\sum_{i=1}^n \exp(\eta Q_i(s, o_j))}{\sum_{o_k\in
      O}\sum_{i=1}^n \exp(\eta Q_i(s, o_k) ) }, \quad \\ &\text{for  }j=1,\ldots, n. \nonumber
  \end{align}
  which is the same as in \eqref{eq:option-policy}.

  Case II(Conjunction): $\eta < 0$: In this case, we have 
  \begin{multline} 
\lambda(\varphi_1,\ldots,\varphi_n \mid \eta; s, o_j)  \\
= -  \frac{1}{\abs{\eta}}\log(\sum_{i=1}^n \exp(- \abs{\eta} \Expect_{ j}[\indicator(s\models\varphi_i)])) .\end{multline} To maximize $ \lambda(\varphi_1,\ldots,\varphi_n \mid \eta; s) $ is equivalent to minimize $\frac{1}{\abs{\eta}}\log(\sum_{i=1}^n \exp(- \abs{\eta} x_i)) $ where $x_i$ is the level of truthfulness for formula $\varphi_i$.  Further, minimizing $\frac{1}{\abs{\eta}}\log(\sum_{i=1}^n \exp(- \abs{\eta} x_i)) $ is equivalent to minimizing $\log(\sum_{i=1}^n \exp(\eta x_i))$ as $\eta=-\abs{\eta}$, which is exactly the opposite case to that of disjunction. Thus, the optimal option policy satisfies
  $\pi^o(s)[j] \propto \exp( -  \lambda(\varphi_1,\ldots,\varphi_n \mid \eta; s, o_j)/\tau)$ (softmin operator) where $  \tau>0$  is a temperature parameter. When $\tau = - 1/\eta$, then given $\lambda(\varphi_1,\ldots,\varphi_n \mid \eta; s, o_j) = Q_i(s, o_j)$, for $j=1,\ldots, n$,
   \begin{align*}
  \pi^o (s)[j] &= \frac{\sum_{i=1}^n \exp( -  Q_i(s, o_j)/\tau)}{\sum_{o_k\in
      O}\sum_{i=1}^n \exp(-Q_i(s, o_k)/\tau ) }\\
      & = \frac{\sum_{i=1}^n \exp(  \eta  Q_i(s, o_j))}{\sum_{o_k\in
      O}\sum_{i=1}^n \exp( \eta Q_i(s, o_k) ) },    
  \end{align*}
  which is the same as in \eqref{eq:option-policy}.  Thus the proof is completed.
\end{proof}

Intuitively, for the case of disjunction,  this policy makes sense because given  $\pi_j$ is the optimal policy for satisfying $\varphi_j$, $\pi^o(s)$ selects policy $j$ with a likelihood proportional to  $\exp (\eta Q_j(s, o_j))$ plus some bonus $\sum_{i\ne j}\exp (\eta Q_j(s, o_i))$ obtained by satisfying other specifications. Given two specifications $\varphi_1$ and $\varphi_2$, since the disjunction can be satisfied by satisfying only one of these two, then this policy exponentially prefers $\pi_1$ to $\pi_2$ if $\varphi_1$ has a higher probability to be satisfied.

The situation is complicated for conjunction. The conjunction of two formulas, $\varphi_1\coloneqq \lnot \varphi_\unsafe \until \sigma_1$ and $\varphi_2\coloneqq \lnot \varphi_\unsafe \until \sigma_2$, is $(\lnot \varphi_\unsafe \until \sigma_1)\land (\lnot \varphi_\unsafe \until \sigma_2)$. For any state, the planner will select the option $i$ with a probability that is inverse proportional to $Q(s,o_i)$, \ie, if $\varphi_1$ has a lower probability to be satisfied, then option $1$ has a higher probability to be chosen.   Once it reaches $\inside{\sigma_1}$, it will select option $2$ with a higher probability because $Q(s,o_2) < Q(s,o_1) =1$ for $s\in \inside{\sigma_1}$ to force a visit to $\inside{\sigma_2}$. 
However, without memory, the planner will alternate between two options indefinitely, or until it reaches an unsafe region. Thus the conjunction on multiple memoryless options  will require additional memory to manage the switching condition of terminating function among goals. However,  when the intersection $\inside{\sigma_1\land \sigma_2}\ne \emptyset$ and either option has a nonzero probability of reaching the intersection, a memoryless composed option may eventually reach a state in $\inside{\sigma_1\land\sigma_2}$. Thus, we may approximate the solution of $\lnot \varphi_\unsafe \until (\sigma_1
\land \sigma_2)$ with a memoryless composed option for the conjunction.  In this paper, we only focus on the memoryless option, further discussions on the additional memory method will be included in the future work. 

Based on the proof of Lemma~\ref{lma:composition}, we can further show that the GCD method is indeed invertible to compute the exclusion $\varphi_1\backslash\varphi_2$. Since $(\varphi_1\land \varphi_2) \lor(\varphi_1\backslash \varphi_2) = \varphi_1$,  we can apply generalized disjunction in Eq. (2) to the MDP $M$, then
\begin{align*}
\centering
\exp(\eta \Expect_{ 1}[\indicator(s\models\varphi_1)]) &\approx \exp(\eta \lambda(\varphi_1\land \varphi_2, \varphi_1\backslash \varphi_2 |\eta;s,o_1) \\ 
    &=  \exp(\eta \Expect_{ 1}[\indicator(s\models\varphi_1\land\varphi_2)]) \\&+ \exp(\eta \Expect_{ 1}[\indicator(s\models\varphi_1\backslash\varphi_2)]).
\end{align*}
Therefore, the Q function of task exclusion can be computed by
\begin{align*}
  Q(s, o_1) &= \Expect_{ 1}[\indicator(s\models\varphi_1\backslash\varphi_2)]\\&\approx
\frac{1}{|\eta|}\log(\exp(\eta \Expect_{ 1}[\indicator(s\models\varphi_1)]) 
\\&- \exp(\eta \Expect_{ 1}[\indicator(s\models\varphi_1\land\varphi_2)])).
\end{align*} 

Finally, the set of options $\cal O$ includes both primitive options---one for each primitive tasks and composed options using \ac{gcd}. The set of actions $A$ is now augmented with options $\cal O$, and the optimal policy can be obtained by solving the following planning problem in the product MDP with an augmented action space. 

\begin{definition}
\label{def:mixed}
Given a labeled MDP $M = (S,A, P,\mu_0, L)$ and an sc-LTL formula $\varphi$, represented by a \ac{dfa} $\calA_\varphi= (Q,\Sigma, \delta,q_0, F)$,  a set $\calO$ of options,  the \emph{product MDP with macro- and micro-actions} is defined by 
\[
M\ltimes \calA_\varphi = (S\times Q, A \cup \calO, \bar P, \bar \mu_0 ),
\]
where the probabilistic transition function is defined by: 
$
\bar P((s',q')|(s,q), a) = P(s'|s,a) \text{ if } q' =\delta(q, L(s')), \;  a\in A,
$  and 
$
\bar P((s',q')|(s,q), o) = \prod_{t=0}^k P (s_{t+1}|s_t, a_t) $ where $ s_0=s$ and $s_{k+1}=s'$,  $k$ is the number of steps that are taken under the (policy) of option $o$ before being interrupted by triggering a discrete transition in the automata,  $\{s_t,a_t, t=0,\ldots \}$ is  Markov chain induced by the policy of option $o$, and $q' = \delta(q,L(s'))$.
 The reward function is defined by $r((s,q),a) = \Expect_{(s',q')}\indicator(q'\in F)$, when $q\notin F$, and $r((s,q),o) =\Expect_{\pi_o } \left[\sum_{t=0}^{k-1} \gamma^t  r(s_t,a_t) \right]$ is the total accumulated rewards when the policy $\pi_o$ of option $o$ is applied to the MDP for the duration of $k$ steps before it is interrupted.
\end{definition}
Note that when the chain is absorbing for any policies of options, then the discounting factor $\gamma$ can be set to $1$. By setting $\gamma\ne 1$, we will encourage the behavior of satisfying the specification in a less number of steps.

\begin{remark}
The planning is performed in the product MDP with both actions and options. It is ensured to recover the optimal policy had only actions being used. Having options helps to speed up the convergence. Note that even if self-loops in the \ac{dfa} have not been considered in generating primitive and composed options, the optimality of the planner will not be affected.
\end{remark}

\begin{remark}
 For the labeled MDP in Def.~\ref{def:mixed}, let $N_S, N_Q, N_A, N_{\calO}$ denote the size of $S, Q, A, \calO$. In value iteration algorithm, the space complexity of the planning performed with actions only is $\mathcal{O}(N_S N_Q N_A)$, while the planning performed with both actions and options consumes $\mathcal{O}(N_S N_Q (N_A+N_O) + N_S N_A N_O)$ of memory,  for storing the policy with both action and options and these option policies. If we assume $\mathcal{O}(N_Q)= \mathcal{O}(N_A)=\mathcal{O}( N_{\calO})= \mathcal{O}(\Bar{N})$, and $\mathcal{O}(N_S) \gg \mathcal{O}(\Bar{N})$, the two methods will have the same space complexity, that is $\mathcal{O}(\Bar{N}^2 N_S)$.
\end{remark}

\section{Case Study}

\begin{figure}[t]
\includegraphics[scale=0.23]{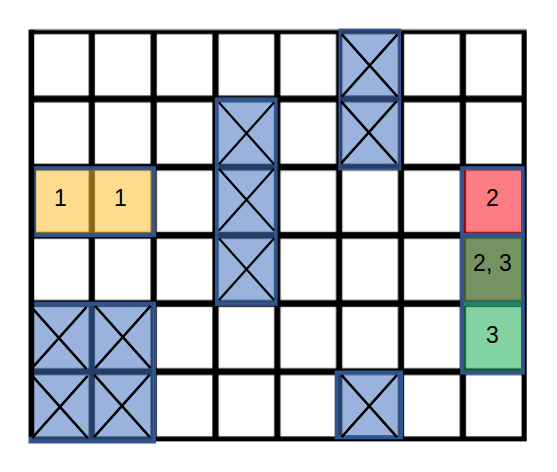}
\centering
\caption{The state space setting for the task cases in a 2-D grid world. 
}
\label{fig:gridworld}\vspace{-2ex}
\end{figure}

\begin{figure}[t]
\hspace{0.1cm}
\includegraphics[scale=0.3]{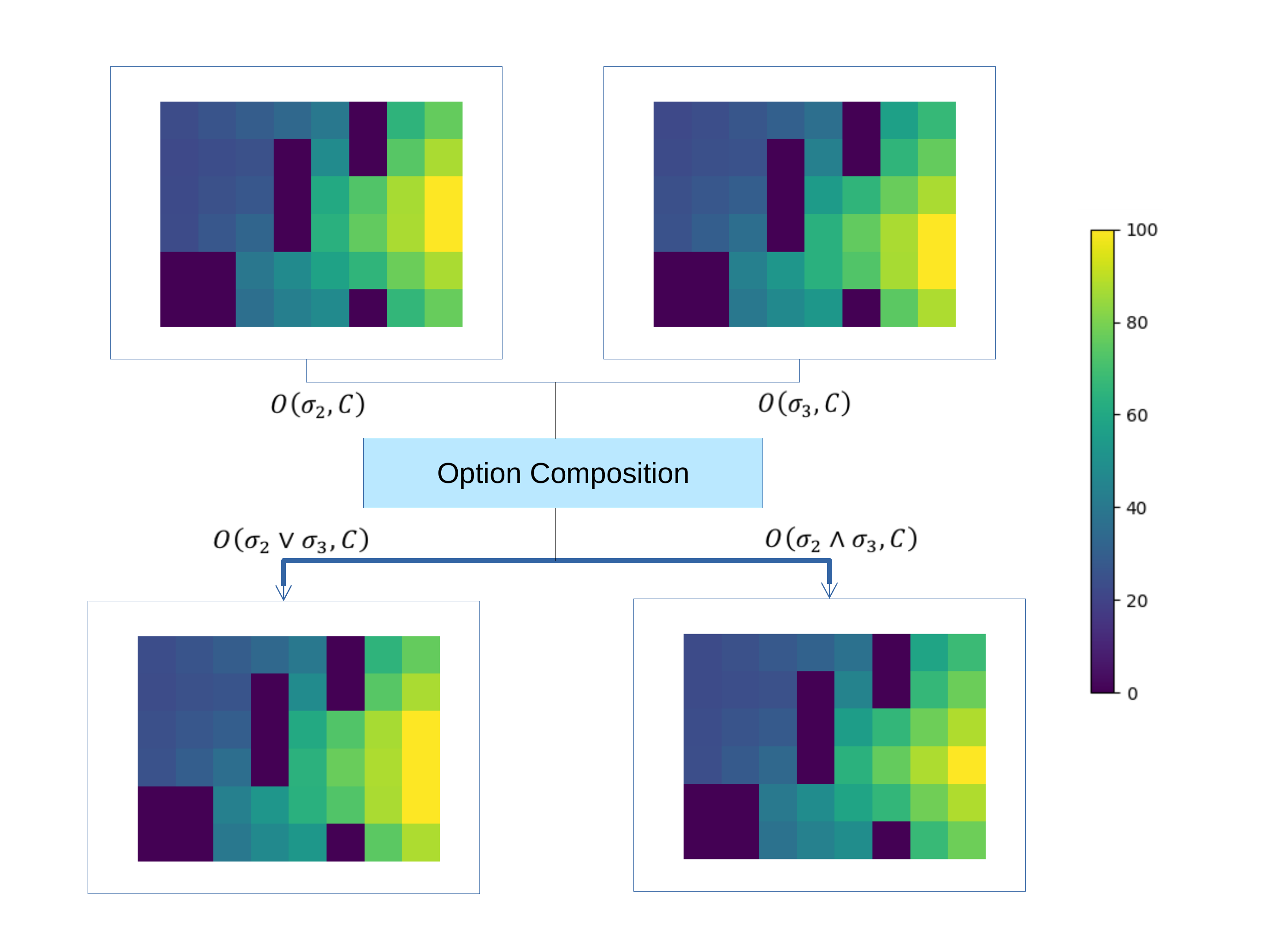}
\centering
\caption{The figure shows the converging distribution of entropy regularized value functions of primitive options $O(\sigma_2, C)$, $O(\sigma_3, C)$ and their compositions to approximate general conjunction and disjunction. }
\label{fig:result1}
\vspace{-4ex}
\end{figure}

This section illustrates our compositional planning method using robotic motion planning problems. All experiments in this section are performed on a computer equipped with an Intel$^{\tiny{\textregistered}}$ Core$^{\text{TM}}$ i7-5820K and 32GB of RAM running a python 3.6 script on a 64-bit Ubuntu$^{\tiny{\textregistered}}$ 16.04 LTS. 

 The environment is modeled as a 2D grid world, shown in Fig.~\ref{fig:gridworld}. The robot has actions: Up, Down, Left, and Right. With probability 0.7, the robot arrives at the cell it intended with the action and has a probability of $(1-0.7)/(|Adj|-1)$ to transit to other adjacent locations, where $|Adj|$ is the number of adjacent cells, including the current cell when the action is applied.  Especially, when the transition hits the boundary of the grid world, the probability under that transition adds to the probability of staying put. The discounting factor $\gamma$ is fixed $0.9$. Fig. \ref{fig:gridworld} shows a grid world of  size $6\times 8$. In this grid world, there are a set of obstacles (unsafe grids) marked with cross signs and several regions of interests, marked with numbers. The cell marked with number $i$ is labeled with symbol $\sigma_i$, for $i=1,2,3$.
 Region marked with number $2,3$ is  the nonempty intersection satisfying $\sigma_2 \land \sigma_3$. 

We consider three sc-LTL tasks $\varphi = \{\varphi_1, \varphi_2, \varphi_3\}$ where
\[\varphi_1 \coloneqq \lnot C\until\left(
 \Eventually ( \sigma_1 \land (\Eventually \sigma_2 \land \Eventually \sigma_3) )\right),
\]
(reach $\inside{\sigma_1}$ and then reach regions $\inside{\sigma_2}$ and $\inside{\sigma_3}$, while avoiding $\inside{C}$.)
\[\varphi_2 \coloneqq \lnot C\until\left(
 \Eventually( (\sigma_1 \lor \sigma_3) \land \Eventually \sigma_2 )\right),
\]
(reach either $\inside{\sigma_1}$ or $\inside{\sigma_3}$ and  then reach $\inside{\sigma_2}$, while avoiding  $\inside{C}$.)
\[\varphi_3 \coloneqq  \lnot C\until\left(
\Eventually ((\sigma_1 \lor \sigma_2) \land \Eventually (\sigma_2 \land \sigma_3)) \right),
\]
(reach  either $\inside{\sigma_1}$ or $\inside{\sigma_3}$  and   then reach  either $\inside{\sigma_2}$ or $\inside{\sigma_3}$, while avoiding  $\inside{C}$.)

Figure \ref{fig:DFA}.a shows the \ac{dfa} of $\varphi_1$. We omit the \ac{dfa}s for $\varphi_2$ and $\varphi_3$ given the limited space.  Based on the set of tasks, the following set of primitive tasks are generated:
\begin{align*}
    \phi_1\coloneqq \lnot C \until \sigma_1, \quad \phi_2\coloneqq \lnot C \until \sigma_2, \quad \phi_3\coloneqq  \lnot C \until\sigma_3. \quad
\end{align*}
For each conditional reachability specifications, we formulate the \ac{ssp} MDP and compute the softmax optimal policy using temperature parameter $\tau=1$ and the reward function defined in Eq. \ref{eq:entropy_reward}, where parameter $\alpha$ is selected to be $100$ to prevent the reward being outweighted by the entropy term.  

Our first experiment is to demonstrate the composition of policies based on GCD.

\paragraph*{Policy composition} We use composition of $ \phi_2=  \lnot C  \until \sigma_2 $, $ \phi_3=  \lnot C \until\sigma_3$ to generate the options $\lnot C\until (\sigma_2\oplus \sigma_3)$ where $\oplus \in \{\land, \lor\}$. To validate that the composed policies are ``good enough'', we compare  the values of the optimal policies for these two formulas, computed using standard value iteration, and the values of  the  composed policies obtained via Lemma~\ref{lma:composition}. For comparison, we consider relative errors $e_{2,\oplus} = \norm{V^{\pi^\oplus}- V^{\pi^{o,\oplus}}}_2/\norm{V^{\pi^{o,\oplus}}}_2$ and $ e_{\infty,\oplus} = \norm{V^{\pi^{\oplus}}- V^{\pi^{o,\oplus}}}_\infty/\norm{V^{\pi^{o,\oplus}}}_\infty$. 
 We have  $e_{2,\lor }\approx 10^{-4}$, $e_{\infty, \lor}\approx 10^{-4}$, $e_ {2,\land} \approx 10^{-3}$, $e_{\infty,\land} \approx 10^{-3}$. 

Figure \ref{fig:result1} shows heat maps comparing two option value functions for the case of disjunction or conjunction.  The shaded areas represent globally unsafe regions with $V$ values always fixed zero during the iteration. In Fig. \ref{fig:result1}, all value distributions are in the range between $0$ to $100$ because we scaled the reward of $1$ by $100$ to avoid entropy term outweighing the total reward. From Fig.~\ref{fig:result1} (c), it is shown that the value of regions marked by either $2$ or $3$ is highest, corresponding to the disjunction. In the case of conjunction in Fig.~\ref{fig:result1} (d), the intersection of regions marked by $2$ and $3$ has the highest value.

Next, we compare the convergence between three different planning methods for three task specifications: planning with only micro-action (action), planning with macro-actions (primitive and composed options), and planning with both micro- and macro-actions (mixed). In addition, we compared the optimality of these planners with optimal planning with only micro-action using hardmax Bellman operator as the baseline. The results to be compared are the speed of convergence and the optimality of the converged policy. 

\begin{figure}[t]
\includegraphics[scale=0.45]{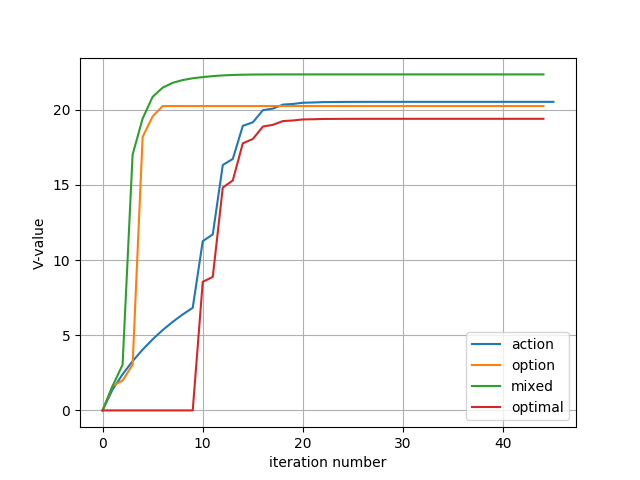}
\centering
\caption{Value iteration of four different planners for task $\varphi_1$.
}
\vspace{-3ex}
\label{fig:result2}
\end{figure}

Figure~\ref{fig:result2} shows the convergence of value function evaluated at the initial state with $s_0 = (3, 3)$ given specification $\varphi_1$.  It shows that among all three methods, the mixed planner converges the fastest. Both option and mixed planners converge much faster than action planner: The action planner converges after about 20  iterations, while the other two converges after 6-9  iterations. It is also interesting to notice that the policy obtained by the mixed planner achieves a higher value comparing to the action planner. This is because the entropy of policy weighs less in the policy obtained by the mixed planner comparing to that obtained by the action planner in softmax optimal planning. Moreover, the influence of entropy can also be observed between softmax action planner (action) and hardmax action planner (optimal), where the two planners converge almost at the same rate but to different values since softmax adds additional policy entropy to the total value.

Table \ref{tab:result} compares the performance of three planners in the given three  tasks. The entry $p$ refers to the probability of satisfying the specification from an initial state $s_0$ under the optimal policies obtained by three planners. Number $n$ is the number of value iterations taken for each method to converge with a pre-defined error tolerance threshold $0.001$.  Number $t$ shows the CPU time costs. 

In converging iteration numbers and CPU times, the advantage of option planner outperforms the other two significantly. Considering the additional time cost from learning the primitive options, the experiment shows that every single option takes in average 30 iterations to converge in a $6\times8$ option state space, and in total costs $0.5$ seconds to compute all the  options for primitive tasks. However, these options only need to be solved for once and are reused across three tasks. The composition of options takes negligible computation time ($0.001$ seconds on average for each composition). The performance loss of option planner, comparing with the global optimal planner (using hardmax Bellman), is only $13\%$ of the optimal value for task $\varphi_1$ and negligible for tasks $\varphi_2$ and $\varphi_3$. 
Composition makes temporal logic planning flexible: If we change a task from $\varphi_1$ to $\varphi_2$, then the option and mixed planner can quickly generate new, optimal policies without reconstructing primitive option.

\begin{table}[t]
\vspace{0.2cm}
\caption{Comparison: Performance, convergence rate, and computation cost}
    \centering
    \resizebox{.49\textwidth}{!}{
    \begin{tabular}{|*{11}{c|}} 
        \hline
         &
         \multicolumn{3}{|c}{$p(\varphi|s_0)$} & \multicolumn{3}{|c|}{iteration($n$)}&
         \multicolumn{3}{|c|}{runtime($t:sec$)} \\
        \hline
        $\varphi$ & $\varphi_1$ & $\varphi_2$ & 
        $\varphi_3$ & $\varphi_1$ & $\varphi_2$ & 
        $\varphi_3$ &
        $\varphi_1$ & $\varphi_2$ & 
        $\varphi_3$\\ 
        \hline
        $Optimal$ &0.54 &0.88 &0.89 &46 &24 &24 &0.6 &0.28 &0.22\\ 
        \hline
        $Action$ &0.35 &0.87 &0.87 &45 &25 &23 &0.6 &0.32 &0.21 \\
        \hline
        $Option$ &0.47 &0.88 &0.88 &8 &6 &5 &0.1 &0.03 &0.02\\
        \hline
        $Mixed$ &0.41 &0.88 &0.87 &34 &18 &15 &0.4 &0.17 &0.12\\
        \hline
    \end{tabular}
    }
    \label{tab:result}
\end{table}

\begin{table}[t]
\vspace{0.15cm}
\caption{The error (2-Norm and $\infty$-Norm) between policies obtained with different methods}
    \centering
    \resizebox{.49\textwidth}{!}{
    \begin{tabular}{|*{11}{c|}} 
        \hline
         &
         \multicolumn{3}{|c|}{Option $\pi_1$ vs Action $\pi_2$}&
         \multicolumn{3}{|c|}{Mixed $\pi_1$  vs Action $\pi_2$} \\
        \hline
        $\varphi$ & $\varphi_1$  & $\varphi_2$ & 
        $\varphi_3$ & $\varphi_1$ & $\varphi_2$ & 
        $\varphi_3$\\ 
        \hline
        $2$-Norm &0.0076 &0.0345 &0.0394 &0.0022 &0.0009 &0.001\\ 
        \hline
        $\infty$-Norm &0.0249 &0.1755 &0.17 &0.0088 &0.0071 &0.0067\\
        \hline
    \end{tabular}
    }
    \label{tab:result2}
\end{table}
 
Last, we evalute and compare policies generated by option, mixed and action planners. Table \ref{tab:result2} shows the relative errors in 2-norm and infinite-norm, \ie, $e(\pi_1,\pi_2) =  \norm{V^{\pi_1} - V^{\pi_{2}}}/\norm{V^{\pi_2}}$.
Both the option planner and mixed planner have negligible deviation to (less than $3\%$) to the action planner, while the option planner is clearly less similar to the action planner comparing to the mixed planner, especially on the $\infty$-norm error. 

\section{CONCLUSIONS}
This paper presents a compositional method for MDP planning constrained by sc-LTL specifications. The method  formally relates the composition of stochastic policies for logical task specifications and  generalized conjunction/disjunction (GCD) in logic. We show that the composition based on GCD is equivalent to the semi-MDP planning under the softmax Bellman operator. 
The semi-MDP planning with both primitive options and composed options achieves much faster convergence, comparing to planning with actions, or a mixture of actions and options, with a relatively small performance loss. Besides, our compositional planning method brings in more flexibility in re-using and composing options for one task in a different task in the same stochastic system with the same labeling function. 

 Although composed options may not be optimal, the convergence to the global optimal policy is guaranteed as the options framework uses both macro-actions/options and micro-actions (actions in the original MDP). The future direction along this line of work is to exploit the compositional planning in model-free reinforcement learning and to improve the scalability of the planning method by replacing value/policy iteration with approximate dynamic programming \cite{bertsekas2008neuro}. We will further investigate finite-memory policy composition   to handle the issue raised from conjunction-based composition.

\addtolength{\textheight}{-12cm}   






\bibliographystyle{plain} 
\bibliography{refs}



\end{document}